\newtheorem{theorem}{Theorem}[section]
\newtheorem{lemma}[theorem]{Lemma}
\newtheorem{conjecture}[theorem]{Conjecture}
\theoremstyle{definition}
\theoremstyle{remark}
\newcommand{\Z}{\mathbb{Z}}
\newcommand{\R}{\mathbb{R}}
\renewcommand{\P}{\mathbb{P}}
\newcommand{\E}{\mathbb{E}}
\newcommand{\F}{\mathbb{F}}
\newcommand{\cA}{\mathcal{A}}
\newcommand{\cB}{\mathcal{B}}
\newcommand{\cC}{\mathcal{C}}
\newcommand{\on}{\operatorname}
\renewcommand{\mod}[1]{\on{mod}#1}
\newcommand{\set}[1]{\left\{#1\right\}}
\author{Huixi Li}
\address{School of Mathematical Sciences and LPMC, Nankai University, Tianjin 300071, China}
\email{lihuixi@nankai.edu.cn}
\author{Biao Wang}
\address{School of Mathematics and Statistics, Yunnan University, Kunming, Yunnan 650091, China}
\email{bwang@ynu.edu.cn}
\author{Chunlin Wang}
\address{School of Mathematical Sciences, Sichuan Normal University, Chengdu 610064, China}
\email{c-l.wang@outlook.com}
\author{Shaoyun Yi}
\address{School of Mathematical Sciences, Xiamen University, Xiamen, Fujian 361005, China}
\email{yishaoyun926@xmu.edu.cn}
\date{\today}
\title[]{On covering systems of polynomial rings over finite fields}
\subjclass[2020]{05B40, 11A07, 11B25, 11T06, 11T55} 
\keywords{covering system, minimum modulus problem, the distortion method, polynomial rings over finite fields}
\begin{document}
	
\begin{abstract}
In 1950, Erd\H{o}s posed a question known as the minimum modulus problem on covering systems for $\mathbb{Z}$, which asked whether the minimum modulus of a covering system with distinct moduli is bounded. This long-standing problem was finally resolved by Hough in 2015, as he proved that the minimum modulus of any covering system with distinct moduli does not exceed $10^{16}$. Recently, Balister, Bollob\'as, Morris, Sahasrabudhe, and Tiba developed a versatile method called the distortion method and significantly reduced Hough's bound to $616,000$. In this paper, we apply this method to present a proof that the smallest degree of the moduli in any covering system for $\mathbb{F}_q[x]$ of multiplicity $s$ is bounded by a constant depending only on $s$ and $q$. Consequently, we successfully resolve the minimum modulus problem for $\mathbb{F}_q[x]$ and disprove a conjecture by Azlin. 
\end{abstract}

\maketitle

\section{Introduction and statement of results}

As mentioned in the preface of the renowned textbook ``Number Theory in Function Fields" \cite{Rosen2002}, elementary number theory focuses on the arithmetic properties of the ring of integers $\mathbb{Z}$ and its field of fractions, the rational numbers $\mathbb{Q}$. Notably, the ring of polynomials over a finite field $\mathbb{F}_q$, denoted as $\mathbb{F}_q[x]$, shares many properties with $\mathbb{Z}$, leading to the expectation that numerous theorems and conjectures for $\mathbb{Z}$ have analogous counterparts for $\mathbb{F}_q[x]$. Honorably, Sawin and Shusterman proved the twin prime conjecture \cite{SS2022_TW} and the quadratic Bateman-Horn conjecture \cite{SS2022_BH} over $\mathbb{F}_q[x]$ in quantitative form in 2022, Bender \cite{Bender2014} partially solved the Goldbach conjecture over $\mathbb{F}_q[x]$ in 2014, Weil \cite{Weil1948} proved the Riemann hypothesis for curves over finite fields in the 1940s, and Gauss \cite{Gauss1966} proved the prime number theorem for $\mathbb{F}_q[x]$ in the 1800s. 

The famous minimum modulus problem for $\mathbb{Z}$ was first proposed by Erd\H{o}s in 1950. After extensive research efforts by various mathematicians over the years, Hough achieved a significant breakthrough in 2015, providing a celebrated solution in his work \cite{Hough2015} by proving that the smallest modulus in any distinct covering system of $\mathbb{Z}$ is at most $10^{16}$. 
In this paper, we aim to completely solve the minimum modulus problem for $\mathbb{F}_q[x]$. In particular, we prove that the smallest degree of the moduli in any covering system of $\mathbb{F}_q[x]$ of multiplicity $s$ is bounded by a constant depending only on $s$ and $q$, as stated in Theorem~\ref{main_theorem} below. 

Let us begin by introducing the current research status of covering systems for $\mathbb{Z}$. A \textit{covering system} for $\mathbb{Z}$ is a finite collection of arithmetic progressions,  the union of which is the integers. Covering systems were first introduced by Erd\H{o}s \cite{Erdos1950}
in 1950 to answer an additive number theory question of Romanoff, in which Erd\H{o}s proved that there exists an arithmetic progression that contains no integers of the form $p + 2^k$ with $p$ a prime and $k$ a natural number. Subsequently, covering systems have not only become a significant research subject in their own right, but also found diverse applications in various fields. In his collections of open problems \cite{Erdos1957,  Erdos1963, Erdos1971, Erdos1973, EG1980}, Erd\H{o}s posed a number of problems concerning covering systems. One of them is the well-known minimum modulus problem: is there a uniform upper bound on the smallest modulus of all covering systems with distinct moduli? We say that a covering system with distinct moduli is a \textit{distinct covering system}. Building upon the work  \cite{FFSPY2007, Nielsen2009, Pari2013} of other mathematicians, Hough \cite{Hough2015} made a remarkable breakthrough in 2015 and showed that the minimum modulus of a distinct covering system can not be arbitrarily large. 
\begin{theorem}[{\cite[Theorem 1]{Hough2015}}]
The least modulus of a distinct covering system is at most $10^{16}$. 
\end{theorem}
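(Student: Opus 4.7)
The plan is to apply the distortion method, arguing the contrapositive: if every modulus in a distinct covering system $\cC$ of $\Z$ is at least $N$, then for $N > 10^{16}$ the cover cannot be complete. Order the primes $p_1 < p_2 < \cdots$, fix smoothness exponents $\alpha_i \in \N$, and let $Q_i = \prod_{j \leq i} p_j^{\alpha_j}$. Partition $\cC$ into stages $\mathcal{S}_i$, where $\mathcal{S}_i$ consists of those congruence classes in $\cC$ whose modulus divides $Q_i$ but not $Q_{i-1}$; by choosing the $\alpha_i$ large enough the collection $\bigcup_i \mathcal{S}_i$ exhausts $\cC$ in any finite truncation of interest.

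First I would construct a sequence of nonnegative measures $\mu_i$ on $\Z/Q_i\Z$, with $\mu_0$ uniform and $\mu_i$ supported on the residues not covered by any progression in $\mathcal{S}_1 \cup \cdots \cup \mathcal{S}_i$. Concretely, $\mu_i$ is the pullback of $\mu_{i-1}$ along $\Z/Q_i\Z \to \Z/Q_{i-1}\Z$, with mass zeroed out on the residues newly covered at stage $i$. The objective is to show that the total mass $\|\mu_i\|_1$ stays strictly positive in the limit, which contradicts $\cC$ being a cover and so forces the existence of an uncovered integer.

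The key estimate bounds the mass lost at stage $i$ by a sum, over $a \bmod n \in \mathcal{S}_i$, of the $\mu_{i-1}$-measure of the coset $a + n\Z$ inside $\Z/Q_i\Z$. Because $\mu_{i-1}$ is not uniform, that coset measure is not simply $1/n$; it is amplified by a ``distortion'' factor that quantifies the non-uniformity of $\mu_{i-1}$ on the subgroup $(Q_i/n)\Z/Q_i\Z$. Tracking this distortion inductively --- via a weighted $L^2$ variance functional expressed on the Fourier side and summed dyadically over the characters of $\Z/Q_i\Z$ --- and controlling its growth from stage to stage is the technical heart of the argument.

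The main obstacle, and the reason the problem resisted attack for 65 years, is precisely the control of the distortion: naively each stage could multiply it by a constant, producing exponential blow-up and a trivial bound. Hough's innovation, which I would adopt, is a second-moment/Fourier argument showing that the variance grows only mildly between consecutive stages, provided every modulus appearing in $\mathcal{S}_i$ exceeds $N$. Combined with an astute choice of the exponents $\alpha_i$ (roughly growing with $p_i$ so that $Q_i$ absorbs the $p_i$-smooth moduli relevant to the calculation), the total cost summed over all stages is shown to fall strictly below $1$ whenever $N$ exceeds $10^{16}$, which preserves $\|\mu_i\|_1 > 0$ forever and yields the desired contradiction.
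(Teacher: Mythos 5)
The paper does not prove this theorem; it is quoted verbatim from Hough \cite{Hough2015} as motivation, so there is no internal proof to compare yours against. What the paper actually proves is Theorem~\ref{main_theorem}, the $\F_q[x]$ analogue, and for that it uses the \emph{distortion method} of Balister--Bollob\'as--Morris--Sahasrabudhe--Tiba \cite{BBMRS2022}, which is a different (and later) technique than the one Hough used to obtain $10^{16}$.

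Your sketch conflates the two. You open by saying ``the plan is to apply the distortion method'' but then describe pruning: $\mu_i$ is obtained from $\mu_{i-1}$ by zeroing mass on newly covered residues, and you track the resulting non-uniformity via a Fourier-side $L^2$ variance. That is Hough's original relative-sieve/second-moment argument, not the distortion method. The distortion method, as laid out in Section~\ref{distortion_method_section} of this paper, does \emph{not} simply zero out covered residues: the update rule for $\P_j$ has two branches governed by the parameter $\delta_j$, and in the branch $\alpha_j(f)\geq\delta_j$ a covered residue keeps positive (reweighted) mass. This deliberate reweighting is what caps the growth of the $L^\infty$ distortion at $\prod(1-\delta_i)^{-1}$ (Lemma~\ref{lem_p_j_estimate}) without any Fourier analysis at all; the only inputs are the first- and second-moment bounds $M_j^{(1)}, M_j^{(2)}$. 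Hough's approach, by contrast, really does need the dyadic character-sum analysis you allude to, because pure pruning can blow up the variance. So either argument can be made to work, but they are not the same argument, and if you intend to reproduce the $10^{16}$ bound specifically, you are committed to Hough's route; the distortion method, in Balister et al.'s optimized form, gives the better constant $616{,}000$.

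Beyond the mislabeling, the sketch is too coarse to constitute a proof of the stated bound: nothing in it pins down the exponents $\alpha_i$, the smoothness cutoffs, or the numerical estimates (Hough's argument leans on explicit computation of smooth-number densities and careful bookkeeping of the variance over dyadic ranges of primes) that actually produce $10^{16}$ rather than ``some absolute constant.'' A high-level outline is fine for orientation, but the theorem as stated is a quantitative claim, and the quantitative content is precisely what is missing.
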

Here are some more recent results on covering systems for $\mathbb{Z}$ following Hough's groundbreaking work. In 2022, Balister, Bollob\'{a}s, Morris, Sahasrabudhe, and Tiba \cite{BBMRS2022} managed to reduce the bound from $10^{16}$ to $616,000$ by introducing the \textit{distortion method}, a general technique used to bound the density of the uncovered set. For more details about this method, please refer to \cite{BBMST2020_2}. 
Soon after, Cummings, Filaseta, and Trifonov \cite{CFT2022} successfully reduced the bound to $118$ for distinct covering systems with squarefree moduli. Additionally, they proved in general the $j$-th smallest modulus in a minimal covering system with distinct moduli is bounded by some unspecified constant $C_j, j \geq 1$. This result was improved by Klein, Koukoulopoulos, and Lemieux \cite{KKL2022} later in the sense that they gave a specified bound for the $j$-th smallest modulus. Recently, Li, Wang, and Yi \cite{LiWangYi2023pre} develop a much more general distortion method by constructing a sequence of probability measures on a finite set with an inverse system and particularly apply it in the number field setting to obtain an analogue of \cite[Theorem~3]{KKL2022} by Klein et al.

Now, let us shift our focus to the discussion of $\mathbb{F}_q[x]$. The polynomial ring $\mathbb{F}_q[x]$ over the finite field $\mathbb{F}_q$ shares several properties with the ring of integers $\mathbb{Z}$. Specifically, both rings are principal ideal domains, possess finitely many units, infinitely many prime elements, and have a finite residue class ring for any non-zero ideal. Naturally, this leads us to inquire whether there exists a uniform upper bound on the smallest degree of congruence in a covering system of $\mathbb{F}_q[x]$. This problem is the analogue of Erd\H{o}s' minimum modulus problem for $\mathbb{F}_q[x]$ and intrigues us to study covering systems of $\mathbb{F}_q[x]$.

In 2011, Azlin \cite{Azlin2011thesis} conducted a specific study on covering systems of $\mathbb{F}_2[x]$. He provided numerous examples of covering systems for $\mathbb{F}_2[x]$, which can be found in \cite[(6.1)-(6.15)]{Azlin2011thesis}. Moreover, Azlin showed that there is no distinct covering system of $\mathbb{F}_2[x]$ that uses at most one polynomial of each degree $\geq 1$; see \cite[Theorem~6.2]{Azlin2011thesis}. In fact, it is not hard to see this kind of result also holds for the general cases $\mathbb{F}_q[x], q\geq 3$ by the density argument $\sum_{i = 1}^\infty \frac{1}{q^i}< 1$. Furthermore, Azlin formulated the following analogue of Erd\H{o}s' minimum modulus problem for $\mathbb{F}_2[x]$.\footnote{Interestingly, emulating Erd\H{o}s and Selfridge, Azlin offers \$3.00 for a disproof of Conjecture~\ref{Azlin conjecture}, Micah Milinovich offers \$5.00 for a proof, and Nathan Jones offers \$4.00 for either.}
 
\begin{conjecture}[{\cite[Conjecture 6.3]{Azlin2011thesis}}]\label{Azlin conjecture}
    In $\F_2[x]$, for any degree $D>0$, there exists a distinct covering system $\cC=\set{a_i(x)\mod{m_i(x)}}_{i=1}^k$
    for which $\deg m_i(x)\geq D$ for each $i=1,2,\dots,k$.
\end{conjecture}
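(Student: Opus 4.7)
The plan is to establish the conjecture by an inductive ``lifting'' construction directly analogous to the iterative constructions of distinct covering systems for $\Z$ that, prior to Hough's theorem, were used to push the known minimum modulus upward. Starting from a small known example, the idea is to repeatedly eliminate the congruences with the smallest modulus by subdividing each such class into finer classes whose moduli have strictly larger degree, until every modulus has degree at least $D$.

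For the base case, I would take one of Azlin's explicit distinct covering systems from \cite[(6.1)--(6.15)]{Azlin2011thesis}, giving an initial system $\cC_0$ with some concrete minimum degree $d_0 \geq 1$.

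For the inductive step, suppose $\cC = \{a_i(x) \pmod{m_i(x)}\}_{i=1}^k$ is a distinct covering system whose smallest modulus has degree $d < D$. Pick a congruence $a(x) \pmod{m(x)}$ with $\deg m(x) = d$ and choose an irreducible polynomial $T(x) \in \F_2[x]$ of large enough degree so that $m(x)T(x)$ is distinct from every modulus currently present in $\cC$. By the Chinese Remainder Theorem, the $2^{\deg T}$ arithmetic progressions $a(x) + j(x)m(x) \pmod{m(x)T(x)}$, with $j(x)$ ranging over representatives of $\F_2[x]/(T(x))$, jointly cover $a(x) \pmod{m(x)}$. Replacing the single class $a(x) \pmod{m(x)}$ by this finer family yields a new distinct covering system $\cC'$ with one fewer class of degree $d$ and all newly introduced classes of strictly larger degree. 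Iterating over all classes of degree $d$ raises the minimum degree to $d+1$, and iterating in $d$ should push the minimum degree up to $D$.

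The main obstacle is sustaining distinctness across unboundedly many lifting steps while preserving a covering. Each new modulus $m(x)T(x)$ must avoid the ever-growing set of already-used moduli, and the density constraint $\sum_i 2^{-\deg m_i(x)} \geq 1$ combined with distinctness severely restricts how moduli can be distributed across degrees: there are only $2^n$ monic polynomials of degree $n$ in $\F_2[x]$, so the number of degree-$n$ moduli in any distinct covering system is at most $2^n$. The Prime Polynomial Theorem provides $\sim 2^n/n$ monic irreducibles of degree $n$, offering some room for the choice of $T(x)$. A careful global bookkeeping of which products $m(x)T(x)$ arise — together with a verification that the total measure of the covered set remains $1$ after each lifting — is the main difficulty, and a purely greedy choice seems inadequate; a more structured selection of $T(x)$ at each stage, perhaps exploiting extra algebraic structure of $\F_2[x]$ or a block-lifting that handles many low-degree congruences simultaneously, will likely be needed to close the induction for arbitrary $D$.
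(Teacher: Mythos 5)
Your proposal sets out to \emph{prove} this conjecture, but the statement is false, and disproving it is precisely the point of the paper: Theorem~\ref{main_theorem} with $s=1$ (and $q=2$) gives an absolute upper bound on $\min_i \deg m_i(x)$ for \emph{every} distinct covering system of $\F_2[x]$, so no distinct covering system exists once $D$ exceeds that bound. The statement appears in the paper only as a quoted conjecture of Azlin that the authors then refute. So this is not a repairable gap in your construction --- the construction cannot be completed for large $D$, no matter how cleverly the irreducibles $T(x)$ are chosen.

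It is worth seeing concretely why the lifting must break down, since you correctly sense the obstacle but treat it as a technical difficulty rather than a fundamental one. Each lifting step is density-neutral: replacing $a(x) \pmod{m(x)}$ by the $2^{\deg T}$ classes mod $m(x)T(x)$ keeps $\sum_i 2^{-\deg m_i}$ fixed. Meanwhile distinctness caps the total density contributed by moduli divisible only by low-degree irreducibles: the paper's Lemma~\ref{lem_keylem_smooth_poly} shows that $\sum q^{-\deg d}$ over polynomials $d$ of degree $\geq t$ all of whose irreducible factors have degree $\leq y$ is $\ll y e^{-t/2y}$, which is tiny once $t \gg y\log y$. The remaining moduli must involve an irreducible factor of large degree, and the second-moment bound in Lemma~\ref{lem_moments_estimate}(2), fed into the distortion method of Section~\ref{distortion_method_section}, shows those classes cover a set of (distorted) measure less than $1$. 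In other words, the uncovered set is provably nonempty whenever $\deg m_i(x)$ is uniformly large --- the "careful global bookkeeping" you hope for cannot exist. If you want to engage with this statement, the correct task is to follow the paper's route (or any route) to a refutation, not a proof.
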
 

In general, the analogue of Erd\H{o}s' minimum modulus problem for $\mathbb{F}_q[x]$ is stated as follows.

\begin{conjecture}\label{main conjecture}
    In $\F_q[x]$, for any degree $D>0$, there exists a distinct covering system $\cC=\set{a_i(x)\mod{m_i(x)}}_{i=1}^k$
    for which $\deg m_i(x)\geq D$ for each $i=1,2,\dots,k$.
\end{conjecture}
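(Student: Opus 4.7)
The plan is to prove Conjecture~\ref{main conjecture} constructively: for each degree $D>0$, I would exhibit an explicit distinct covering system of $\F_q[x]$ all of whose moduli have degree at least $D$. The approach is a polynomial analogue of the classical iterative constructions used by Erd\H{o}s and others in $\Z$, relying on the Chinese Remainder Theorem in $\F_q[x]$ to replace small-degree moduli with larger, pairwise distinct ones.

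Concretely, I would start from a base covering, such as the (non-distinct) family $\{c \pmod{x} : c \in \F_q\}$, and iteratively refine each congruence $c \pmod{m}$ of too-small modulus into a sub-covering of its residue class. The refinement takes the form of congruences $c + m b_i \pmod{m p_i}$, where the $p_i$ are pairwise coprime polynomials of large degree coprime to $m$, and the residues $b_i$ are chosen via CRT so that the union of the refined classes recovers $c \pmod{m}$. Choosing the $b_i$ amounts to producing a covering of a residue ring $\F_q[x]/(p_{i_1}\cdots p_{i_k})$, which I would obtain recursively by applying the same construction at one scale smaller. To ensure distinctness, I would maintain a ledger of moduli already used and, whenever a duplicate would be introduced, swap the offending modulus for a fresh irreducible of strictly larger degree. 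Iterating this refinement uniformly reduces the maximal small-modulus content and, if it can be made to terminate, produces the desired covering.

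The main obstacle is the interplay between distinctness and the covering condition. A covering requires $\sum_i q^{-\deg m_i} \geq 1$, and although the supply of monic polynomials of degree $\geq D$ is infinite (so the total available density is unbounded), the residues $b_i$ must be placed so as to actually tile each residue class without parasitic overlap while all moduli remain distinct. Proving that the greedy refinement terminates with no uncovered points, and that the ledger of moduli never forces an impossible choice, is precisely the difficulty that makes the integer minimum-modulus problem so delicate. The real work in this program would be either to design an ingenious global arrangement of irreducibles $p_i$ and residues $b_i$---for example, exploiting self-similar structure coming from the Frobenius on $\F_q[x]$, or from towers $\F_q \subset \F_{q^n}$---or to establish existence probabilistically by choosing the residues at random conditional on a carefully engineered skeleton of moduli. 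I expect this combinatorial/analytic core to be the true bottleneck; indeed, given Hough's resolution of the integer analogue, a serious attempt at this plan would likely run into the same density-against-distortion tension that Hough showed is ultimately fatal for $\Z$, suggesting that the construction above may in fact not terminate in $\F_q[x]$ either.
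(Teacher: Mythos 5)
The fundamental problem is one of direction: Conjecture~\ref{main conjecture} is false, and the paper's contribution is precisely to \emph{disprove} it. Theorem~\ref{main_theorem} with $s=1$ shows that every distinct covering system of $\F_q[x]$ must contain a modulus of degree at most $3c\log c$ for an absolute constant $c$; hence for any $D$ beyond this bound no covering system of the kind you seek exists, and no construction --- iterative, probabilistic, or otherwise --- can produce one. Your proposal sets out to prove the statement constructively, so it is attempting the impossible. To your credit, your closing paragraph correctly identifies the ``density-against-distortion tension'' as the likely fatal obstruction and even predicts that the construction ``may in fact not terminate in $\F_q[x]$ either''; that suspicion is exactly right, but a proof proposal whose final sentence forecasts its own failure is not a proof, and the honest conclusion to draw from your analysis is that you should be trying to refute the conjecture rather than establish it.

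Where your construction concretely breaks down: the CRT refinement step forces you to cover residue rings $\F_q[x]/\langle p_{i_1}\cdots p_{i_k}\rangle$ using moduli that are distinct not only within each subproblem but globally across your entire ledger, while the supply of monic irreducibles of degree $n$ is only about $q^n/n$, each contributing density $q^{-n}$ to the class it refines. The paper quantifies why this bookkeeping cannot close up: it adapts the distortion method of Balister, Bollob\'as, Morris, Sahasrabudhe and Tiba to $\F_q[x]$, building a sequence of tilted probability measures $\P_j$ on $\F_q[x]/\langle Q(x)\rangle$, bounding the first and second moments $M_j^{(1)}$ and $M_j^{(2)}$ (Lemma~\ref{lem_moments_estimate}), and invoking a friable-polynomial estimate (Lemma~\ref{lem_keylem_smooth_poly}). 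If every modulus has degree exceeding $3(c+3\log_q s)\log(cs+3s\log_q s)$, the total distorted measure of the covered set is strictly less than $1$, so some residue class modulo $Q(x)$ is left uncovered no matter how the residues $b_i$ are arranged. If you want to salvage anything from your plan, the correct deliverable is not a covering but a proof that your greedy refinement must stall once $D$ is large --- and that is exactly what the distortion argument supplies.
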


In this work, we will disprove Conjecture~\ref{main conjecture}, showing that the following Theorem~\ref{main_theorem} holds. To state our main theorem we first introduce some notation. Let $\F_q$ be a finite field with $q$ elements. Let $\cA=\set{A_i\colon 1\leq i\leq n}$ be a finite collection of arithmetic progressions in $\F_q[x]$, where $A_i=a_i(x)+\langle d_i(x)\rangle, a_i(x)\in \F_q[x],d_i(x)\in \F_q[x]$, $d_i(x)$ is monic, and $\langle d_i(x)\rangle=d_i(x)\F_q[x]$ for $1\leq i\leq n$. For any $f(x)\in \F_q[x]$, we let $|f(x)|=q^{\deg f(x)}$ be the norm of $f(x)$ in $\F_q[x]$. Suppose $1<|d_1(x)|\leq |d_2(x)|\leq \cdots\leq|d_n(x)|$. Let 
\[
    m(\cA)=\max_{d(x)\in \F_q[x]}\#\{1\leq i\leq n\colon d_i(x)=d(x)\}
\]
be the multiplicity of $\cA$. We say $\cA$ is a \textit{covering system} of $\F_q[x]$ if $\F_q[x]=\bigcup_{i=1}^n(a_i(x)+\langle d_i(x)\rangle)$. Then the main result of our paper is stated in the following theorem.
\begin{theorem}\label{main_theorem}
Let $s$ be a positive integer. Let $\cA=\{a_1(x)+\langle d_1(x)\rangle, \ldots, a_n(x)+\langle d_n(x)\rangle\}$ be a covering system of $\mathbb{F}_q[x]$ of multiplicity $m(\cA) = s$. Then there exists an absolute constant $c>0$  such that
\[
\min_{1\leq i\leq n}\deg d_i(x)\leq 3(c+3\log_qs)\log(cs+3s\log_qs).
\]
\end{theorem}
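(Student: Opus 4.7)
The plan is to adapt the distortion method of Hough \cite{Hough2015} and Balister, Bollob\'as, Morris, Sahasrabudhe, and Tiba \cite{BBMRS2022}, in the refined form of Klein, Koukoulopoulos, and Lemieux \cite{KKL2022} and the abstract framework of Li, Wang, and Yi \cite{LiWangYi2023pre}, to the polynomial ring $\F_q[x]$. We argue by contradiction: suppose $\cA=\{a_i(x)+\langle d_i(x)\rangle\}_{i=1}^n$ is a covering system of multiplicity $s$ whose smallest modulus has degree exceeding the claimed bound $D=3(c+3\log_qs)\log(cs+3s\log_qs)$ for an absolute constant $c$ to be chosen. The goal is to exhibit a residue class modulo $M(x)\colonequals\mathrm{lcm}_i d_i(x)$ that escapes the covering, yielding the desired contradiction.

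First I would stratify the prime divisors by degree. Let $\mathcal{P}$ be the set of monic irreducible polynomials in $\F_q[x]$ dividing some $d_i(x)$. Choose a geometric sequence of thresholds $D=D_0<D_1<\cdots<D_L$ with $D_L>\max_i\deg d_i(x)$, and partition $\mathcal{P}$ into levels $\mathcal{P}_j=\{p(x)\in\mathcal{P}:D_{j-1}\leq\deg p(x)<D_j\}$. Let $R_j(x)=\prod_{p\in\mathcal{P}_j}p(x)$, so that the Chinese Remainder Theorem for $\F_q[x]$ yields the product decomposition $\F_q[x]/\langle M(x)\rangle\cong\prod_{j=1}^L\F_q[x]/\langle R_j(x)\rangle$ (with the standard modifications for prime powers). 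This product structure plays the role that the factorization of a squarefree integer plays in \cite{BBMRS2022,KKL2022}.

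Next I would inductively construct a decreasing sequence of probability measures $\mu_0\geq\mu_1\geq\cdots\geq\mu_L$ on the uncovered subset of $\F_q[x]/\langle M(x)\rangle$, starting from the uniform measure $\mu_0$. At stage $j$, I would first apply a smoothing operation to $\mu_{j-1}$ along the $j$-th coordinate, then subtract the contributions of those arithmetic progressions in $\cA$ whose modulus $d_i(x)$ has its ``heaviest'' prime divisor lying in $\mathcal{P}_j$. The central technical input is the function-field analogue of the distortion inequality of \cite[Lemma 4.1]{BBMRS2022}: the $L^2$ norm of the Radon--Nikodym derivative $d\mu_j/d\mu_{j-1}-1$ is bounded by a quantity polynomial in $q^{-D_{j-1}}$ and $s$. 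This would be proved by Fourier analysis on $\F_q[x]/\langle R_j(x)\rangle$, exploiting orthogonality of the additive characters $\psi$ of $\F_q$ composed with $\F_q$-linear functionals, together with Weil-type bounds for the resulting character sums.

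The main obstacle is establishing this distortion inequality in the function-field setting, where one must combine three ingredients: (i) orthogonality of additive characters of $\F_q[x]/\langle R_j(x)\rangle$ to decouple the contributions of different levels; (ii) a counting bound, controlled by the multiplicity $s$, on the number of moduli $d_i(x)$ whose divisor structure is concentrated in level $j$; and (iii) the size estimate $|\F_q[x]/\langle R_j(x)\rangle|=q^{\deg R_j(x)}$, which grows at the desired geometric rate. Once the distortion bound is in hand, iterating over $j=1,\ldots,L$ shows that the mass removed at level $j$ is bounded by $s\cdot q^{-D_{j-1}}$ times the surviving mass, so $\mu_L(\F_q[x]/\langle M(x)\rangle)>0$, contradicting the assumption that $\cA$ covers $\F_q[x]$. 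The multiplicity factor $s$ propagates through the $L=O(\log_q(sD))$ iterations and, together with the geometric choice of thresholds, produces the $\log_qs$ and $\log(cs+3s\log_qs)$ terms in the final bound.
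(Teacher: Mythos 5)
Your high-level plan (the distortion method, adapting Balister et al.~to $\F_q[x]$) is the right family of ideas and matches the paper's skeleton, but the proposal misidentifies the key technical difficulty and omits the step that actually produces the theorem's bound.

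The paper's distortion construction proceeds one prime $p_j(x)$ at a time with the simple binary choice $\delta_j\in\{0,\tfrac12\}$ split at a single threshold degree $y=C+3\log_q s$, and the critical inequality is not a Fourier-analytic $L^2$ distortion bound but an elementary moment estimate (Lemma~\ref{lem_moments_estimate}) proved by the Chinese Remainder Theorem, counting, and a function-field Mertens estimate. There is no character sum, no orthogonality decoupling, and in particular \emph{no Weil-type input} anywhere in the argument; proposing Weil bounds signals a misdiagnosis of where the work lies. More seriously, your claim that ``the mass removed at level $j$ is bounded by $s\cdot q^{-D_{j-1}}$ times the surviving mass'' does not account for the \emph{number} of admissible moduli supported on small-degree primes, which can be large. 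The first-moment contribution from small primes is really $s\sum_{\deg d\geq \deg d_1,\ \partial^+d\leq y}q^{-\deg d}$, i.e., a sum over all $y$-friable polynomials of degree at least $\deg d_1(x)$, and controlling it requires the friable-polynomial count $\psi(n,m)\ll q^n e^{-u/2}$ (Lemma~\ref{lem_keylem_smooth_poly}). This estimate is precisely what turns the threshold choices $y=C+3\log_q s$ and $t=3y\log(sy)$ into the stated bound $3(c+3\log_q s)\log(cs+3s\log_q s)$; without it, there is no route from your stratification to a bound of that shape, and your final paragraph's quantitative accounting does not close. So the proposal, as written, has a genuine gap: the friable-polynomial ingredient (and the resulting $\eta_1$ analysis) is absent, and the proposed replacement (Fourier orthogonality plus Weil bounds) does not supply it.
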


Theorem~\ref{main_theorem} provides a solution to the analogue of Erd\H{o}s' minimum modulus problem for $\mathbb{F}_q[x]$ by taking $s=1$. It is comparable to Hough's result \cite{Hough2015} in the following sense.  An advantage of our main result is that it allows for flexibility in choosing the multiplicity $s$, which is not necessarily restricted to $1$ as in Hough's work \cite{Hough2015}, while a drawback is that the absolute constant $c$ has not been explicitly specified. For the proof of Theorem~\ref{main_theorem}, we follow the approaches of Theorem 3 in Klein et al.'s work \cite{KKL2022}. One could also try to find an explicit upper bound on the smallest modulus in a covering system with distinct moduli of $\F_q[x]$, following through computations similar as those done by Balister et al.\cite{BBMRS2022} but within this new setting.

As an application of Theorem~\ref{main_theorem},  we obtain the following theorem. 

\begin{theorem}
\label{thm_application}
	The degree of the modulus with the $n$-th smallest degree in a minimal
covering system of $\F_q[x]$ with distinct moduli is bounded.
\end{theorem}

Theorem~\ref{thm_application} is an analogue of Theorem 1.2 in the work \cite{CFT2022} of Cummings, Filaseta, and Trifonov. One may use an inductive argument similar to \cite[Theorem 1.2]{CFT2022} to get a proof of Theorem~\ref{thm_application}. Instead, we follow
the work \cite[Theorem 3]{KKL2022} of Klein, Koukoulopoulos, and Lemieux to give an alternative proof in Section~\ref{sec_application}. If one can  find a suitable generalisation of the following theorem of Crittenden and Vanden Eynden: any $n$ arithmetic progression that covers the first $2^n$ positive integers cover all the integers, then one may get a quantitative form of Theorem~\ref{thm_application} by a combinatorial argument similar to the proof in Section~\ref{sec_application}.

Our paper is organized as follows. In Section~\ref{distortion_method_section}, we introduce a variant of Balister et al.'s distortion method for polynomial rings over finite fields. In Section~\ref{some_tech_lemmas_section}, we apply some moment estimations and present some results on friable polynomials to prepare for the proof of our main theorem. In Section~\ref{proof_main_theorem_section}, we complete the proof of Theorem~\ref{main_theorem}. In Section~\ref{sec_application}, we prove Theorem~\ref{thm_application}.

\section{The distortion method for polynomial rings over finite fields}\label{distortion_method_section}

In this section we introduce the distortion method for polynomial rings over finite fields. We first introduce some notation. Let $\cA=\{a_1(x)+\langle d_1(x)\rangle, \ldots, a_n(x)+\langle d_n(x)\rangle\}$ be a finite collection of arithmetic progressions of multiplicity $s$ in $\F_q[x]$. And we let $Q(x)=\mathrm{lcm}[d_1(x),\dots, d_n(x)]$, and write $Q(x)=\prod_{i=1}^Jp_i^{\nu_i}(x)$ with $|p_1(x)|\leq |p_2(x)|\leq \cdots \leq |p_J(x)|$. Let $Q_j(x)=\prod_{i=1}^jp_i^{\nu_i}(x), 1\leq j \leq J$ and $Q_0(x)=1$. For $1 \leq j \leq J$ we let 
\[
\cB_j=\bigcup_{\substack{1\leq i \leq n\\ d_i(x)\mid Q_j(x), d_i(x)\nmid Q_{j-1}(x)}} \set{f(x)+\langle Q(x)\rangle\colon f(x)\in a_i(x) + \langle d_i(x)\rangle}.
\]
Let $\pi_j\colon\F_q[x]/\langle Q(x)\rangle\to \F_q[x]/\langle Q_j(x)\rangle$ be the natural projection and let $\pi_0$ be the trivial map, i.e., $\pi_0\colon\F_q[x]/\langle Q(x)\rangle\to \{0\}$. Let 
\[
F_j(f(x))=\set{\tilde{f}(x)\in \F_q[x]/\langle Q(x)\rangle\colon \pi_j(\tilde{f}(x))=\pi_j(f(x))}
\]
be the fibre at $f(x)\in \F_q[x]/\langle Q(x)\rangle$. Then we can observe that
\[
    \#F_j(f(x))=q^{\deg Q(x)- \deg Q_j(x)}.
\]
For any $f(x)\in \F_q[x]/\langle Q(x)\rangle$, define
\[
\alpha_j(f(x))=\frac{\# (F_{j-1}(f(x))\cap\cB_j)}{\#F_{j-1}(f(x))}.
\]

Let $\P_0$ be the uniform probability measure. Let $\delta_1,\dots,\delta_J\in[0,\frac12]$. Assuming we have defined $\P_{j-1}$, we define $\P_j$ as follows:
\[
\P_j(f(x))\colonequals \P_{j-1}(f(x)) \cdot \left\{
\begin{aligned}
	&\frac{1_{f(x)\notin \cB_j}}{1-\alpha_j(f(x))}, & \text{if }  \alpha_j(f(x))<\delta_j,\\
	&\frac{\alpha_j(f(x))-1_{f(x)\in \cB_j}\delta_j}{\alpha_j(f(x))(1-\delta_j)} , & \text{if }  \alpha_j(f(x))\geq \delta_j.
\end{aligned}
\right.
\]

For $k\in\Z_{\geq 1}$, we put
\[
M_j^{(k)}\colonequals\E_{j-1}[\alpha_j^k]=\sum_{f\in \F_q[x]/\langle Q(x)\rangle}\alpha_j^k(f(x))\P_{j-1}(f(x)).
\]

The following theorem, which is a variant of Balister et al.'s result \cite[Theorem 3.1]{BBMRS2022}, is key to the distortion method for polynomial rings over finite fields.
\begin{theorem}
	If
	\[
	\sum_{j=1}^J
	\min\left\{M_j^{(1)},\frac{M_j^{(2)}}{4\delta_j(1-\delta_j)}\right\}<1 ,
	\]
	then $\cA$ does not cover $\F_q[x]$. 
\end{theorem}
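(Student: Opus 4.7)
The plan is to adapt the distortion-method strategy of \cite{BBMRS2022} to the function-field setting, using the probability measures $\P_0,\P_1,\ldots,\P_J$ already constructed above. If $\cA$ did cover $\F_q[x]$, then every residue $f(x)\in\F_q[x]/\langle Q(x)\rangle$ would lie in some $\cB_j$, because each modulus $d_i(x)$ has a unique largest index $j$ with $d_i(x)\mid Q_j(x)$ and $d_i(x)\nmid Q_{j-1}(x)$. Hence $\bigcup_{j=1}^{J}\cB_j=\F_q[x]/\langle Q(x)\rangle$, and the proof reduces to establishing, for each $j$, the inequality
\[
\P_J(\cB_j)\ \le\ \min\!\set{M_j^{(1)},\ \frac{M_j^{(2)}}{4\delta_j(1-\delta_j)}},
\]
since summing this against the hypothesis yields the impossible chain $1\le\sum_{j=1}^{J}\P_J(\cB_j)<1$.

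I would first establish three structural facts by induction on $j$: (i) each $\P_j$ is a probability measure; (ii) $\P_j$ is constant on the fibres of $\pi_j$; and (iii) $\P_k(\cB_j)=\P_j(\cB_j)$ for every $k\ge j$. Facts (i) and (ii) fall out of a single fibrewise computation over $\pi_{j-1}$: on each such fibre $F$ both $\alpha_j$ and $\P_{j-1}$ are constant, while the indicator $1_{f(x)\in\cB_j}$ is constant on the finer $\pi_j$-subfibres of $F$, and a direct check of the two cases in the definition of $\P_j$ shows that the total $\P_{j-1}$-mass of $F$ is preserved (giving (i)) and that the resulting $\P_j$ is constant on each $\pi_j$-subfibre of $F$ (giving (ii)). Fact (iii) is then immediate from the same mass-preservation, because $\cB_j$ is a union of $\pi_j$-fibres and therefore of $\pi_{k-1}$-fibres for every $k>j$.

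The core calculation is the identity
\[
\P_j(\cB_j)\ =\ \frac{1}{1-\delta_j}\,\E_{j-1}\!\left[(\alpha_j-\delta_j)\,1_{\alpha_j\ge\delta_j}\right],
\]
read off directly from the definition: fibres with $\alpha_j<\delta_j$ contribute nothing (the Case~1 reweighting zeros out $\cB_j\cap F$), while fibres with $\alpha_j\ge\delta_j$ contribute exactly the stated amount. Combining with (iii) gives $\P_J(\cB_j)=\P_j(\cB_j)$. The two advertised upper bounds then drop out of two elementary pointwise inequalities on $\alpha\in[0,1]$: from $\alpha\le 1$ one obtains $(\alpha-\delta)_+\le(1-\delta)\alpha$, which integrates to $\P_J(\cB_j)\le M_j^{(1)}$; and from $(\alpha-2\delta)^2\ge 0$ one obtains $(\alpha-\delta)_+\le \alpha^2/(4\delta)$, which integrates to $\P_J(\cB_j)\le M_j^{(2)}/(4\delta_j(1-\delta_j))$.

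The main obstacle, such as it is, is the fibrewise bookkeeping in (i)--(iii); one must track simultaneously that $\P_j$ remains a probability measure and that it is constant on $\pi_j$-fibres, because both are needed for the identity for $\P_j(\cB_j)$ to make sense fibre by fibre. Beyond that the argument transfers from $\Z$ to $\F_q[x]$ with only cosmetic changes, since $\F_q[x]/\langle Q(x)\rangle$ carries the same Chinese remainder structure as $\Z/Q\Z$ and the projections $\pi_j$ play the role of the modular projections in the original $\Z$-argument. The genuine analytic work---producing useful upper bounds on the moments $M_j^{(k)}$---is deferred to the later sections.
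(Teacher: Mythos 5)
Your proposal is correct and follows essentially the same route as the paper: establish that mass passes unchanged through the later updates (so $\P_J(\cB_j)=\P_j(\cB_j)$), compute $\P_j(\cB_j)=\tfrac{1}{1-\delta_j}\E_{j-1}\bigl[(\alpha_j-\delta_j)_+\bigr]$, and bound it by $M_j^{(1)}$ and by $M_j^{(2)}/(4\delta_j(1-\delta_j))$ via the elementary pointwise inequalities $(\alpha-\delta)_+\le(1-\delta)\alpha$ and $(\alpha-\delta)_+\le\alpha^2/4\delta$; summing against the hypothesis gives $\sum_j\P_J(\cB_j)<1$, contradicting a cover. The paper presents the first bound slightly more directly (as $\P_j(\cB_j)\le\P_{j-1}(\cB_j)=\E_{j-1}[\alpha_j]=M_j^{(1)}$) and compresses the structural facts you label (i)--(iii) by deferring to the more general treatment in its companion paper; your more explicit fibrewise bookkeeping---$\P_j$ being a probability measure constant on $\pi_j$-fibres, and $\cB_j$ being a union of $\pi_j$-fibres---is exactly what makes those compressed steps legitimate. (One phrasing slip: the index $j$ with $d_i(x)\mid Q_j(x)$ and $d_i(x)\nmid Q_{j-1}(x)$ is the unique such index, not the ``largest''; this doesn't affect the argument.)
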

\begin{proof}
Interested readers can refer to the proof of \cite[Theorem 2.1]{LiWangYi2023pre}, which presents a proof in a more general setting. To make this paper self-contained, we provide a summary of some key steps here. 

By the construction of $\P_j$ and $\cB_j$, we have $\P_j(\cB_j) \leq \P_{j-1}(\cB_j)=\E_{j-1}[\alpha_j] = M_j^{(1)}$ for $1\leq j \leq J$. Moreover, by the inequality $\max\set{a-d,0}\leq a^2/4d$ for all $a,d>0$ we have 
\begin{align*}
    \P_j(\cB_j) & = \sum_{f(x) + \langle Q(x)\rangle \in \cB_j} \max \set{0,\frac{\alpha_j(f(x))-\delta_j}{\alpha_j(f(x))(1-\delta_j)}}\P_{j-1}(f(x)) \\
    & = \frac{1}{1-\delta_j}\sum_{f(x) + \langle Q(x)\rangle \in \cB_j} \max \set{0,\alpha_j(f(x))-\delta_j}\P_{j-1}(f(x)) \\
    & \leq \frac{1}{1-\delta_j}\sum_{f(x) + \langle Q(x)\rangle \in \cB_j} \frac{\alpha_j(f(x))^2}{4\delta_j}\P_{j-1}(f(x))\\
    & = \frac{\E_{j-1}[\alpha_j^2]}{4\delta_j(1-\delta_j)}\\
    & = \frac{M_j^{(2)}}{4\delta_j(1-\delta_j)}.
\end{align*}
Hence we have 
\begin{equation}\label{eqn_pjbj}
\P_j(\cB_j)\leq \min\set{M_j^{(1)},\frac{M_j^{(2)}}{4\delta_j(1-\delta_j)}}.
\end{equation}
Since it is easy to check $\P_j(\cB_j)=\P_{j+1}(\cB_j)=\cdots=\P_J(\cB_j)$ for $1 \leq j \leq J$, summing both sides of \eqref{eqn_pjbj} over $1\leq j \leq J$ and applying the assumption of the theorem yields 
\[
\sum_{j = 1}^J \P_J(\cB_j) < 1, 
\]
which completes the proof of the theorem. 
\end{proof}

\section{Some technical lemmas}\label{some_tech_lemmas_section}
In this section, we estimate the moments $M_j^{(1)}$ and $M_j^{(2)}$ and provide results on friable polynomials over finite fields. 

\subsection{Bounding the first and second moments}
First, we provide upper bounds for the $\alpha_j(f(x))$ term that shows up in the definition of $M_j^{(k)}$, where $f(x)\in \mathbb{F}_q[x]/\langle Q(x)\rangle$, $1\leq j \leq J$, and $1 \leq k \leq 2$.
\begin{lemma}\label{lem_alpha_estimate}
	For any $f(x)\in \F_q[x]/\langle Q(x)\rangle$ and $1\leq j \leq J$, we have
\[
\alpha_j(f(x))\leq \sum_{r=1}^{\nu_j} \sum_{g(x)\mid Q_{j-1}(x)} \sum_{\substack{1\leq i \leq n\\ d_i(x)=g(x)p_j^r(x)}} \frac{1_{f(x)\subseteq a_i(x)+\langle g(x)\rangle}}{|p_j(x)|^r}.
\]
\end{lemma}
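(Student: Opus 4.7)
\medskip

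The plan is to expand the definition of $\alpha_j$, apply a union bound over the cosets that constitute $\mathcal B_j$, and then use the Chinese Remainder Theorem to evaluate each intersection exactly.

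First, I would unpack $\mathcal B_j$. The index set in its definition consists of those $i$ such that $d_i(x)\mid Q_j(x)$ but $d_i(x)\nmid Q_{j-1}(x)$. Since $Q_j(x)=Q_{j-1}(x)p_j^{\nu_j}(x)$ and $p_j(x)$ is coprime to $Q_{j-1}(x)$, such a $d_i(x)$ factors uniquely as $d_i(x)=g(x)p_j^r(x)$ with $g(x)\mid Q_{j-1}(x)$ and $1\leq r\leq \nu_j$. Thus
\[
\mathcal B_j = \bigcup_{r=1}^{\nu_j}\bigcup_{g(x)\mid Q_{j-1}(x)}\bigcup_{\substack{1\leq i\leq n \\ d_i(x)=g(x)p_j^r(x)}} \{\tilde f(x)+\langle Q(x)\rangle: \tilde f(x)\in a_i(x)+\langle d_i(x)\rangle\},
\]
and a union bound gives
\[
\#(F_{j-1}(f(x))\cap \mathcal B_j)\leq \sum_{r=1}^{\nu_j}\sum_{g(x)\mid Q_{j-1}(x)}\sum_{\substack{1\leq i\leq n \\ d_i(x)=g(x)p_j^r(x)}} \#\bigl(F_{j-1}(f(x))\cap (a_i(x)+\langle d_i(x)\rangle)\bigr).
\]

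Next I would compute each term exactly. An element $\tilde f(x)\in \F_q[x]/\langle Q(x)\rangle$ lies in $F_{j-1}(f(x))\cap (a_i(x)+\langle d_i(x)\rangle)$ iff $\tilde f(x)\equiv f(x)\pmod{Q_{j-1}(x)}$ and $\tilde f(x)\equiv a_i(x)\pmod{g(x)p_j^r(x)}$. Since $g(x)\mid Q_{j-1}(x)$, the second congruence splits as $\tilde f(x)\equiv a_i(x)\pmod{g(x)}$ together with $\tilde f(x)\equiv a_i(x)\pmod{p_j^r(x)}$. Compatibility with the first congruence forces $f(x)\equiv a_i(x)\pmod{g(x)}$, i.e.\ the indicator $1_{f(x)\subseteq a_i(x)+\langle g(x)\rangle}$ must equal $1$. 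When this holds, the two conditions $\tilde f(x)\equiv f(x)\pmod{Q_{j-1}(x)}$ and $\tilde f(x)\equiv a_i(x)\pmod{p_j^r(x)}$ are coprime in modulus, so by CRT the number of solutions in $\F_q[x]/\langle Q(x)\rangle$ is $|Q(x)|/(|Q_{j-1}(x)|\,|p_j(x)|^r)$.

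Finally, I would divide by $\#F_{j-1}(f(x))=|Q(x)|/|Q_{j-1}(x)|$ to obtain
\[
\frac{\#(F_{j-1}(f(x))\cap (a_i(x)+\langle d_i(x)\rangle))}{\#F_{j-1}(f(x))} = \frac{1_{f(x)\subseteq a_i(x)+\langle g(x)\rangle}}{|p_j(x)|^r}
\]
for each allowed triple $(r,g(x),i)$, and summing over the triples yields the stated bound. There is no real obstacle here; the only points requiring care are recognizing that the factorization $d_i(x)=g(x)p_j^r(x)$ with $g(x)\mid Q_{j-1}(x)$ is unique (so the union bound is applied cleanly), and correctly identifying the nonemptiness condition $f(x)\equiv a_i(x)\pmod{g(x)}$ before invoking CRT.
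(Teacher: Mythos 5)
Your proposal is correct and follows essentially the same route as the paper's proof: union bound over the arithmetic progressions comprising $\mathcal B_j$, the unique factorization $d_i(x)=g(x)p_j^r(x)$ with $g(x)\mid Q_{j-1}(x)$, the nonemptiness condition $f(x)\equiv a_i(x)\pmod{g(x)}$, and a CRT count yielding $|Q(x)|/(|Q_{j-1}(x)|\,|p_j(x)|^r)$ solutions. The only cosmetic difference is that you compute each term exactly and then divide by $\#F_{j-1}(f(x))$ at the end, whereas the paper carries the normalization factor $q^{\deg Q_{j-1}(x)-\deg Q(x)}$ through from the start.
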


\begin{proof}
Notice that $\# F_{j-1}(f(x))=q^{\deg Q(x)- \deg Q_{j-1}(x)}$. For $f(x) \in \F_q[x]/\langle Q(x)\rangle$, we write it as $f(x)=h(x)+\langle Q(x)\rangle$ for some $h(x)\in \F_q[x]$. For $a(x) \in \mathbb{F}_q[x]$, if $a(x)+\langle Q(x)\rangle \in F_{j-1}(f(x))\cap\cB_j$, then by the definition of $F_{j-1}(f(x))$ and $\cB_j$ the variable $a(x)$ will satisfy the following two conditions: (1) $a(x)\equiv h(x)\on{mod}{Q_{j-1}(x)}$ and (2) there exists some $1\le i \le n$ with $d_i(x)\mid Q_j(x), d_i(x)\nmid Q_{j-1}(x)$ such that $a(x) \equiv a_i(x)\on{mod}{d_i(x)}$. It follows that 
\begin{align}
    \alpha_j(f(x))&=\frac{\#(F_{j-1}(f(x))\cap\cB_j)}{\# F_{j-1}(f(x))} \nonumber\\
    &\leq\frac1{q^{\deg Q(x)-\deg Q_{j-1}(x)}}\sum_{\substack{1\leq i \leq n\\ d_i(x)\mid Q_j(x), d_i(x)\nmid Q_{j-1}(x)}}\sum_{\substack{a(x)+\langle Q(x)\rangle \in \mathbb{F}_q[x] / \langle Q(x)\rangle\\
a(x)\equiv h(x)\on{mod}{Q_{j-1}(x)}\\a(x) \equiv a_i(x)\on{mod}{d_i(x)}}}1.\label{lem_alpha_estimate_eqn1}
\end{align}

Since $d_i(x)\mid Q_j(x)$ and $d_i(x)\nmid Q_{j-1}(x)$, we may write $d_i(x)=g(x)p_j^r(x)$ for some $g(x)\mid Q_{j-1}(x)$ and $1\leq r\leq \nu_j$. Since $a(x)\equiv a_i(x)\mod{d_i(x)}$ and $a(x)\equiv h(x)\mod{Q_{j-1}(x)}$, we have $h(x)\equiv a_i(x)\mod{g(x)}$ and hence $f(x)\subseteq a_i(x)+\langle g(x)\rangle$. 

With $a(x)\equiv a_i(x)\mod{d_i(x)}$, we can deduce that $a(x)\equiv a_i(x)\mod{p_j^r(x)}$. Combining it together with  $a(x)\equiv h(x)\mod{Q_{j-1}(x)}$ and using the Chinese Remainder Theorem, we find that $a(x)$ lies in some congruence class mod $Q_{j-1}(x)p_j^r(x)$. In particular, there are $|Q(x)|/|Q_{j-1(x)}p_j^r(x)|$ choices for $a(x) \mod{Q(x)}$ lying in a congruence class mod $Q_{j-1}(x)p_j^r(x)$. Thus, we arrive at the following estimate on the double summation
\begin{align}
    &\sum_{\substack{1\leq i \leq n\\ d_i(x)\mid Q_j(x), d_i(x)\nmid Q_{j-1}(x)}}\sum_{\substack{a(x)+\langle Q(x)\rangle \in \mathbb{F}_q[x] / \langle Q(x)\rangle\\
a(x)\equiv h(x)\on{mod}{Q_{j-1}(x)}\\a(x) \equiv a_i(x)\on{mod}{d_i(x)}}}1 \nonumber\\
\leq & \sum_{r=1}^{\nu_j} \sum_{g(x)\mid Q_{j-1}(x)} \sum_{\substack{1\leq i \leq n\\ d_i(x)=g(x)p_j^r(x)}}  1_{f(x)\subseteq a_i(x)+\langle g(x)\rangle} \cdot \frac{|Q(x)|}{|Q_{j-1(x)}p_j^r(x)|}.\label{lem_alpha_estimate_eqn2}
\end{align}
Then the desired upper bound for $\alpha_j(f(x))$ follows immediately by plugging \eqref{lem_alpha_estimate_eqn2} into \eqref{lem_alpha_estimate_eqn1}. This completes the proof.
\end{proof}

To estimate the $\P_{j-1}(f(x))$ term in the definition of $M_j^{(k)}$, where $f(x)\in \mathbb{F}_q[x]/\langle Q(x)\rangle$, $1\leq j \leq J$, and $1 \leq k \leq 2$, we organize the representatives $f(x)$ based on their residues modulo a divisor $d(x)$ of $Q(x)$. Subsequently, we provide the following estimate. 
\begin{lemma}\label{lem_p_j_estimate}
For each $0\leq j\leq J, f(x)\in \F_q[x]$, and any monic polynomial $d(x)\in \F_q[x]$ such that $d(x)\mid Q(x)$, we have
\[
\P_j(f(x)+\langle d(x)\rangle)\leq q^{-\deg d(x)}\prod_{p_i(x)\mid d(x),\ i\leq j}\frac{1}{1-\delta_i}.
\]
\end{lemma}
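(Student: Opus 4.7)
The plan is to prove the lemma by induction on $j$. The base case $j=0$ is immediate: since $\P_0$ is uniform on $\F_q[x]/\langle Q(x)\rangle$, one has $\P_0(f(x)+\langle d(x)\rangle)=(|Q(x)|/|d(x)|)/|Q(x)|=q^{-\deg d(x)}$, which matches the right-hand side because the empty product equals $1$.

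For the inductive step I would first establish two structural properties of the constructed measures. \emph{Property (i) (level-$j$ measurability):} $\P_j$ is constant on every fibre of $\pi_j$. Inductively, $\P_j(g(x))$ equals $\P_{j-1}(g(x))$ times a correction factor, and $\P_{j-1}$ depends only on $\pi_{j-1}(g(x))$ (and hence on $\pi_j(g(x))$), while the correction factor only involves $\alpha_j(g(x))$, which factors through $\pi_{j-1}$, and $1_{g(x)\in\cB_j}$, which factors through $\pi_j$ since $\cB_j$ is a union of cosets modulo $Q_j(x)$. \emph{Property (ii) (mass preservation on $\pi_{j-1}$-fibres):} for every $h(x)$ we have $\P_j(F_{j-1}(h(x)))=\P_{j-1}(F_{j-1}(h(x)))$; on such a fibre $F$ both $\alpha_j$ and $\P_{j-1}$ are constant and $|F\cap\cB_j|=\alpha_j(h(x))|F|$, so a direct calculation in each branch of the definition of $\P_j$ telescopes to total mass $|F|\P_{j-1}(h(x))$.

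With (i) and (ii) in hand I would split into cases according to whether $p_j(x)\mid d(x)$. \emph{Case A ($p_j\nmid d$):} factor $d=d'd''$ with $d'=\gcd(d,Q_{j-1})$ and $d''=d/d'$, so $d''$ is coprime to $Q_j$. Setting $C=f(x)+\langle d(x)\rangle$ and decomposing $C$ into its intersections with the $\pi_{j-1}$-fibres $F$ that it meets, each $F\cap C$ further splits into the $\pi_j$-sub-fibres of $F$, on which $\P_j$ is constant by (i); the remaining constraint cutting $C$ out of each sub-fibre is $g(x)\equiv f(x)\pmod{d''}$, and coprimality with $Q_j$ forces each sub-fibre to meet $C$ in a $1/|d''|$ proportion. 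Double counting gives $\sum_{g\in F\cap C}\P_j(g)=\P_j(F)/|d''|$, and (ii) rewrites this as $\P_{j-1}(F)/|d''|=\sum_{g\in F\cap C}\P_{j-1}(g)$; summing over $F$ yields $\P_j(C)=\P_{j-1}(C)$, and the inductive hypothesis finishes the case since the product over primes $p_i\mid d$ with $i\leq j$ coincides with the one over $i\leq j-1$. \emph{Case B ($p_j\mid d$):} check the pointwise bound $\P_j(g)\leq\P_{j-1}(g)/(1-\delta_j)$ directly from the definition (using $1-\alpha_j(g)>1-\delta_j$ in the first branch, and checking the two sub-cases $g\in\cB_j$ and $g\notin\cB_j$ in the second), then sum over $g\in C$ and invoke the inductive hypothesis; the extra factor $(1-\delta_j)^{-1}$ is exactly what the product acquires.

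The main obstacle is really the bookkeeping in Case A: distilling the structural facts (i) and (ii), and then carefully tracking the coprime factorization $d=d'd''$ in the double counting so that the comparison with $\P_{j-1}$ is exact rather than lossy (the crude pointwise bound used in Case B would give an unwanted extra factor of $(1-\delta_j)^{-1}$ here). Once these pieces are in place, the induction closes routinely.
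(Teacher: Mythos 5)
Your proof is correct and follows essentially the same strategy as the paper: induction on $j$, with the case $p_j(x)\mid d(x)$ handled by the pointwise bound $\P_j \le \P_{j-1}/(1-\delta_j)$ and the case $p_j(x)\nmid d(x)$ handled by an exact computation exploiting the coprime factorization $d = d'd''$, $\P_j$-constancy on $\pi_j$-fibres, and mass preservation on $\pi_{j-1}$-fibres. The paper invokes these last two structural facts implicitly (writing $\P_j(f+\langle d\rangle)=\P_j(f+\langle m\rangle)/|\ell|=\P_{j-1}(f+\langle m\rangle)/|\ell|$), whereas you isolate and verify them as properties (i) and (ii), but the substance is the same.
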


\begin{proof} We follow the proof of \cite[Lemma 3.4]{BBMRS2022} by induction on $j$. Since $\P_0$ is the uniform probability measure, we have $\P_0(f(x)+\langle d(x)\rangle)=1/|d(x)|=q^{-\deg d(x)}$. So the estimate holds for the case $j=0$. Let $j\geq1$, and assume the estimate holds for $\P_{j-1}$. By the definition of $\P_j$, we notice that 
\[
\P_j(f(x)+\langle d(x)\rangle)\leq\frac{1}{1-\delta_j}\P_{j-1}(f(x)+\langle d(x)\rangle),\quad \forall \, f(x)\in \F_q[x].
\]

On one hand, suppose that $p_j(x)\mid d(x)$, then we have 
\begin{align*}
    \P_j(f(x)+\langle d(x)\rangle)&\leq \frac{1}{1-\delta_j}\P_{j-1}(f(x)+\langle d(x)\rangle)\\
    &\leq \frac{1}{1-\delta_j} q^{-\deg d(x)}\prod_{p_i(x)\mid d(x), i< j}\frac{1}{1-\delta_i}\\
    &=q^{-\deg d(x)}\prod_{p_i(x)\mid d(x), i\leq j}\frac{1}{1-\delta_i}.
\end{align*}

On the other hand, if $p_j(x) \nmid d(x)$, then $d(x)=m(x)\ell(x)$, where $m(x)=\gcd(d(x),Q_j(x))$ and $\gcd(\ell(x),Q_{j-1}(x))=1$. It follows that
\begin{align*}
    \P_j(f(x)+\langle d(x)\rangle)&=\frac{\P_j(f(x)+\langle m(x)\rangle)}{|\ell(x)|}=\frac{\P_{j-1}(f(x)+\langle m(x)\rangle)}{|\ell(x)|}\\
    &\leq \frac{1}{|\ell(x)||m(x)|} \prod_{p_i(x)\mid m(x), i< j}\frac{1}{1-\delta_i}=q^{-\deg d(x)}\prod_{p_i(x)\mid d(x), i\leq j}\frac{1}{1-\delta_i},
\end{align*}
as required.   
\end{proof}
By utilizing Lemma~\ref{lem_alpha_estimate} and Lemma~\ref{lem_p_j_estimate}, we can derive upper bounds for $M_j^{(1)}$ and $M_j^{(2)}$, where $1 \leq j \leq J$, provided that the parameters $\delta_i$, $1 \leq i \leq J$, are suitably chosen.
\begin{lemma}\label{lem_moments_estimate}
Let $s=m(\cA)$, let $\delta_1,\dots,\delta_J\in[0,\frac12]$, and let $j\in\{1, 2, \ldots, J\}$.
\begin{enumerate}
    \item If $\delta_i=0$ for $i\in\{1, \ldots, j-1\}$, then
    \[
        M_j^{(1)}\leq s\sum_{\substack{ \deg d(x)\geq \deg d_1(x)\\ \partial^+d(x)=\deg p_j(x)}}q^{-\deg d(x)}.
    \]
    Here, $\partial^+d(x)\colonequals\max\set{\deg p(x)\colon p(x)|d(x), p(x) \text{ is irreducible}}$ is the largest degree of the prime divisors of $d(x)$.
    \item We have
    \[
     M_j^{(2)}\ll \frac{s^2(\deg p_j(x))^{6}}{q^{2\deg p_j(x)}}.
    \]
\end{enumerate}    
\end{lemma}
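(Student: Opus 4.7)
The plan is to bound each moment $M_j^{(k)}$ by substituting the pointwise upper bound on $\alpha_j(f(x))$ from Lemma~\ref{lem_alpha_estimate}, swapping the order of summation so that the $f(x)$-sum collapses to $\P_{j-1}$ of a coset, and then controlling that $\P_{j-1}$-mass via Lemma~\ref{lem_p_j_estimate}. In both parts, the multiplicity hypothesis $m(\cA)=s$ handles the innermost sum over indices $i$ with $d_i(x)$ equal to a prescribed divisor, producing a factor $s^k$.

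For part (1), I would substitute the bound of Lemma~\ref{lem_alpha_estimate} into $M_j^{(1)}=\sum_f\alpha_j(f(x))\P_{j-1}(f(x))$ and swap sums, reducing the task to estimating $\P_{j-1}(a_i(x)+\langle g(x)\rangle)$ for each triple $(r,g,i)$ with $d_i(x)=g(x)p_j^r(x)$ and $g(x)\mid Q_{j-1}(x)$, $1\leq r\leq \nu_j$. Since every prime factor of $g(x)$ is some $p_i(x)$ with $i\leq j-1$, the hypothesis $\delta_i=0$ for $i<j$ makes the product appearing in Lemma~\ref{lem_p_j_estimate} trivial, giving $\P_{j-1}(a_i(x)+\langle g(x)\rangle)\leq q^{-\deg g(x)}$. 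Combined with $|p_j(x)|^{-r}=q^{-r\deg p_j(x)}$, the contribution becomes $q^{-\deg d(x)}$ for $d(x)=g(x)p_j^r(x)$, and the count of realized $i$ is at most $s$. Any such $d(x)$ satisfies $\partial^+d(x)=\deg p_j(x)$ (because $p_j(x)\mid d(x)$ while every prime dividing $g(x)$ has degree at most $\deg p_{j-1}(x)\leq \deg p_j(x)$) and $\deg d(x)\geq \deg d_1(x)$ (since $d(x)=d_i(x)$ for some $i$), so dropping the realization constraint yields the claimed inequality.

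For part (2), I would square the estimate of Lemma~\ref{lem_alpha_estimate}, take the expectation, and swap sums to get
\[
M_j^{(2)} \leq \sum_{i_1,i_2}\frac{\P_{j-1}\!\left((a_{i_1}(x)+\langle g_{i_1}(x)\rangle)\cap(a_{i_2}(x)+\langle g_{i_2}(x)\rangle)\right)}{|p_j(x)|^{r_{i_1}+r_{i_2}}}.
\]
Each intersection is either empty or a coset modulo $\ell(x)=\on{lcm}(g_{i_1}(x),g_{i_2}(x))$, whose $\P_{j-1}$-mass Lemma~\ref{lem_p_j_estimate} bounds by $q^{-\deg\ell(x)}\prod_{p_k\mid\ell,\,k\leq j-1}(1-\delta_k)^{-1}$. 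Grouping pairs by $(d_{i_1}(x),d_{i_2}(x))$ and invoking multiplicity brings out $s^2$; the sum over $r_1,r_2\geq 1$ produces a geometric-series constant times $|p_j(x)|^{-2}$, accounting for $q^{-2\deg p_j(x)}$; and the residual sum over $g_1(x),g_2(x)\mid Q_{j-1}(x)$ factors multiplicatively into an Euler product of the form $\prod_{k<j}\bigl(1+O(|p_k(x)|^{-1})\bigr)$ after using $\delta_k\leq 1/2$. Gauss's prime polynomial theorem gives $\sum_{k<j}|p_k(x)|^{-1}\ll \log\deg p_j(x)$, and hence the product is controlled by a power of $\deg p_j(x)$.

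The principal obstacle is calibrating the exponent in part (2): the logarithm of the Euler product is already $O(\log\deg p_j(x))$, so a polynomial factor in $\deg p_j(x)$ is intrinsic, but pinning the exponent down to $6$ will require sharp per-prime bounds on $(1-\delta_k)^{-1}\sum_{m\geq 1}(2m+1)|p_k(x)|^{-m}$ and an efficient application of the friable polynomial estimates developed in the next subsection. The remaining steps—expansion of the squared union bound, interchange of sums, separation of the $p_j(x)$-variable from the rest, and reduction to a multiplicative sum over $Q_{j-1}(x)$'s prime factors—are routine.
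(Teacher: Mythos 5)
Your proposal matches the paper's proof in structure: expand $\alpha_j^k$ via Lemma~\ref{lem_alpha_estimate}, interchange the order of summation, apply the Chinese Remainder Theorem and Lemma~\ref{lem_p_j_estimate} to the resulting coset intersections, and use the multiplicity hypothesis to bound the number of admissible index tuples by $s^k$. Part~(1) is carried out exactly as in the paper, including the observation that each realized $d(x)=g(x)p_j^r(x)$ automatically satisfies $\partial^+d(x)=\deg p_j(x)$ and $\deg d(x)\geq\deg d_1(x)$. For part~(2), you correctly locate the source of the exponent in the per-prime factor: with $(1-\delta_i)^{-1}\leq 2$ the Euler factor is $1+\frac{6}{|p_i(x)|}+O\bigl(\frac{1}{|p_i(x)|^2}\bigr)$, so $\prod_{i<j}(\cdots)\leq\exp\bigl(6\sum_{i<j}|p_i(x)|^{-1}+O(1)\bigr)\ll(\log|p_j(x)|)^6\ll(\deg p_j(x))^6$ by the function-field Mertens estimate, and there is no further obstacle to pinning down the exponent. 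One small misattribution: friable polynomial estimates (Lemma~\ref{lem_keylem_smooth_poly}) play no role in part~(2); they are used only later, in the proof of Theorem~\ref{main_theorem}, to bound $\eta_1$ after invoking part~(1).
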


\begin{proof} 
For any positive integer $k$, by Lemma~\ref{lem_alpha_estimate} we have
\begin{align}
    \E_{j-1}(\alpha_j^k)&=\sum_{f(x)\in \F_q[x]/\langle Q(x)\rangle}\alpha_j^k(f(x))\P_{j-1}(f(x))\nonumber\\
    &\leq \sum_{1\leq r_1,\dots, r_k\leq \nu_j} \sum_{g_1(x),\dots, g_k(x) \mid Q_{j-1}(x)} \sum_{\substack{1\leq i_1,\dots, i_k\leq n\\ d_{i_\ell}(x)=g_\ell(x) p_j^{r_\ell}(x), \forall \ell}} \frac{\P_{j-1}\left(\bigcap_{\ell=1}^k(a_{i_\ell}(x)+\langle g_\ell(x)\rangle)\right)}{|p_j(x)|^{r_1+\cdots+r_k}}.\label{kth moment}
\end{align}
We observe that $|g_\ell(x) p_j(x)^{r_\ell}|\geq |d_1(x)|$ for all $1\leq \ell\leq k$, since $|d_i(x)|\geq |d_1(x)|$ for all $1\leq i\leq n$. Moreover, given $r_1, \ldots, r_k$ and $g_1(x), \ldots, g_k(x)$, there are at most $s^k$ choices for $i_1, \ldots, i_k$ with $d_{i_\ell}(x)=g_\ell(x) p_j(x)^{r_\ell}$. By applying the Chinese Remainder Theorem, for each such choice of $i_1, \ldots, i_k$, the set $\bigcap_{\ell=1}^k(a_{i_\ell}(x)+\langle g_\ell(x)\rangle)$ is either empty or a congruence class modulo $\mathrm{lcm}[g_1(x), \ldots, g_k(x)]$. Using Lemma~\ref{lem_p_j_estimate} we can see that 
\begin{equation}\label{prob of intersection of APs}
    \P_{j-1}\left(\bigcap_{\ell=1}^k(a_{i_\ell}+\langle g_\ell(x)\rangle)\right) \leq \frac{1}{|\mathrm{lcm}[g_1(x), \ldots, g_k(x)]|}\prod_{\substack{i\leq j-1\\ p_i(x)\mid\mathrm{lcm}[g_1(x), \ldots, g_k(x)]}}(1-\delta_i)^{-1}
\end{equation}
for at most $s^k$ choices of $1\leq i_1,\dots, i_k\leq n$.
Then plugging \eqref{prob of intersection of APs} into \eqref{kth moment} we obtain
\begin{align*}
\E_{j-1}(\alpha_j^k)&\leq s^k \sum_{1\leq r_1,\dots, r_k\leq \nu_j} \sum_{\substack{g_1(x),\dots, g_k(x) \mid Q_{j-1}(x)\\ |g_\ell(x) p_j(x)^{r_\ell}|\geq |d_1(x)|, \forall \ell}} \frac{1}{|\mathrm{lcm}[g_1(x), \ldots, g_k(x)]|\cdot|p_j(x)|^{r_1+\cdots+r_k}}\\
&\qquad\qquad \cdot\prod_{\substack{i\leq j-1\\ p_i(x)\mid \mathrm{lcm}[g_1(x), \ldots, g_k(x)]}}(1-\delta_i)^{-1}.    
\end{align*}
In particular, when $k=1$ and $\delta_i=0$ for all $i\leq j-1$, it follows that
\begin{align*}
    M_j^{(1)} 
    & \leq s\sum_{1\leq r \leq \nu_j} \sum_{\substack{g(x)\mid Q_{j-1}(x)\\ |g(x)p_j(x)^r|\geq |d_1(x)|}} \frac{1}{|g(x)p_j(x)^r|}  \\
    & \leq s
    \sum_{\substack{\deg d(x)\geq \deg d_1(x)\\ \partial^+d(x)=\deg p_j(x)}}
    \frac{1}{|d(x)|}.
\end{align*}
This proves part (1) of the lemma. 

Next, let $k=2$ and $\delta_i\in[0,1/2]$ for all $1 \leq i \leq J$. It is clear that
\begin{equation*}
    \prod_{p_i(x)\mid \mathrm{lcm}[g_1(x), g_2(x)]}(1-\delta_i)^{-1}\leq 2^{\omega(\mathrm{lcm}[g_1(x), g_2(x)])}.
\end{equation*}
Here, $\omega(g(x))$ denotes the number of distinct monic irreducible polynomials factor of $g(x)$ for any $g(x)\in\F_q[x]$. Notice that $2^{\omega(g(x))}$ is a multiplicative function for $g(x)\in\F_q[x]$. Hence, we have
\begin{align*}
   M_j^{(2)} & \leq s^2\sum_{1\leq r_1, r_2\leq \nu_j}\sum_{g_1(x), g_2(x)\mid Q_{j-1}(x)}\frac{2^{\omega(\mathrm{lcm}[g_1(x), g_2(x)])}}{|\mathrm{lcm}[g_1(x), g_2(x)]| \cdot |p_j(x)|^{r_1+r_2}}\\
   &\leq \frac{s^2}{(|p_j(x)|-1)^2}\sum_{g_1(x), g_2(x)\mid Q_{j-1}(x)}\frac{2^{\omega(\mathrm{lcm}[g_1(x), g_2(x)])}}{|\mathrm{lcm}[g_1(x), g_2(x)]|}\\
   &\leq \frac{s^2}{(|p_j(x)|-1)^2}\prod_{i<j}\left(1+\sum_{\nu\geq 1}\frac{2}{|p_i(x)|^\nu} (2\nu+1)\right)\\
   &\ll \frac{s^2}{|p_j(x)|^2}\prod_{i<j}\left(1+\frac{6}{|p_i(x)|}+O\left(\frac{1}{|p_i(x)|^2}\right)\right)\\
   &\leq \frac{s^2}{|p_j(x)|^2}\,\mathrm{exp}\left\lbrace \sum_{i<j}\frac{6}{|p_i(x)|}+O\left(\frac{1}{|p_i(x)|^2}\right)\right\rbrace\\
   &\ll \frac{s^2(\log|p_j(x)|)^6}{|p_j(x)|^2}.
\end{align*}
The second last inequality holds due to  the fact that $1+t\leq e^t$ for all $t\in\R$. The last inequality in the above calculations is derived from Mertens’ estimate for function fields, i.e., $\sum_{|p(x)|\leq N}1/|p(x)|=\log\log N+O(1)$, which follows from the prime number theorem for function fields as shown in \cite[Theorem~(3.3.2)]{KnopfmacherZhang2001}. In conclusion, this completes the proof of part (2) of the lemma. 
\end{proof}

\subsection{A result on friable polynomials over finite fields}
First, we state the following counting result on friable polynomials over finite fields, which is essential for the proof of the main theorem. Let 
\begin{equation}
    \psi(n,m)\colonequals\#\set{f(x)\in \F_q[x] \text{ monic}\colon \deg f(x)=n, \partial^+ f(x)\leq m}.
\end{equation}
By \cite[Inequality (2)]{Warlimont1991} or \cite[Lemma 3.1]{Wang2022}, we have that $\psi(n,m)\ll q^ne^{-u/2}$ for $n\geq m\geq 3$ and $u=n/m$. Consequently, it follows that
\begin{lemma}\label{lem_keylem_smooth_poly}
For $n\geq m\geq3$ and $u=n/m$ we have
\[
\sum_{\substack{\deg d(x)\geq n\\  \partial^+d(x)\leq m}}q^{-\deg d(x)}\ll me^{-u/2}.
\]
\end{lemma}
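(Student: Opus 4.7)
The plan is to reduce the tail sum to a geometric series after separating by degree and invoking the cited bound on $\psi(n,m)$.

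First, I would stratify the sum according to the exact degree of $d(x)$, writing
\[
\sum_{\substack{\deg d(x)\geq n\\ \partial^+d(x)\leq m}} q^{-\deg d(x)} = \sum_{k\geq n} q^{-k}\,\#\{d(x)\in\F_q[x]\text{ monic}:\deg d(x)=k,\ \partial^+d(x)\leq m\} = \sum_{k\geq n} q^{-k}\,\psi(k,m).
\]
Here we must also account for the non-monic multiples (each monic polynomial contributes $q-1$ scalar multiples), but since this only inflates the bound by a harmless constant it fits into the $\ll$ notation; alternatively one observes that the original sum can be restricted to monics after factoring out the leading coefficient.

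Next, I would apply the hypothesis $\psi(k,m)\ll q^k e^{-u_k/2}$ with $u_k=k/m$, which is valid for every $k\geq n\geq m\geq 3$, to obtain
\[
\sum_{k\geq n} q^{-k}\,\psi(k,m) \;\ll\; \sum_{k\geq n} e^{-k/(2m)}.
\]

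Finally, I would sum the geometric series explicitly:
\[
\sum_{k\geq n} e^{-k/(2m)} \;=\; \frac{e^{-n/(2m)}}{1-e^{-1/(2m)}}.
\]
Since $1-e^{-t}\geq t/2$ for $0<t\leq 1$ and $1/(2m)\leq 1/6$ when $m\geq 3$, we have $1-e^{-1/(2m)}\geq 1/(4m)$, hence
\[
\sum_{k\geq n} e^{-k/(2m)} \;\leq\; 4m\,e^{-n/(2m)} \;=\; 4m\,e^{-u/2},
\]
which gives the desired estimate.

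There is no real obstacle: the only genuine input is the friable-polynomial bound already quoted from \cite{Warlimont1991, Wang2022}; the rest is summation of a geometric series whose common ratio $e^{-1/(2m)}$ forces the factor $m$ on the right-hand side. The only minor point to verify is that $\psi(k,m)\ll q^k e^{-k/(2m)}$ is applicable throughout the range $k\geq n$, which is immediate since the hypothesis $k\geq m\geq 3$ of the cited inequality is preserved for every $k\geq n$.
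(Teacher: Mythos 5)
Your proof is correct and follows essentially the same route as the paper: stratify by degree, apply the cited bound $\psi(k,m)\ll q^k e^{-k/(2m)}$ to each degree, and sum the resulting geometric series of ratio $e^{-1/(2m)}$ to extract the factor $m$. The only differences are cosmetic — you make the geometric series computation and the constant $4m$ explicit, and you flag the monic-vs-arbitrary-leading-coefficient point, both of which the paper leaves implicit (the paper also has a small notational slip writing $e^{-u/2}$ where $e^{-v/(2m)}$ is meant in the intermediate step, which you implicitly correct by using $u_k=k/m$).
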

\begin{proof} 
With $\psi(n,m)\ll q^ne^{-u/2}$, we have 
\[
\sum_{\substack{\deg d(x)\geq n\\ \partial^+d(x)\leq m}}q^{-\deg d(x)}=\sum_{\substack{v\geq n\\ d(x)\in \psi(v, m)}}q^{-v}
    \ll \sum_{v\geq n}q^v e^{-v/2m}q^{-v}
    =\sum_{v\geq n}e^{-v/2m}
    \ll me^{-u/2}.
\]
\end{proof}

\section{Proof of the Main Theorem}\label{proof_main_theorem_section}
To prove the main theorem, it suffices to show that there exists an absolute constant $c>0$ such that if
\begin{equation}\label{eqn_condition}
    \deg d_1(x)> 3(c+3\log_qs)\log(cs+3s\log_qs), 
\end{equation}
then $\cA$ does not cover $\F_q[x]$. In particular, it reduces to show that if \eqref{eqn_condition} holds for some sufficiently large $c$, then
\[
\eta\colonequals \sum_{j=1}^J
\min\left\{M_j^{(1)},\frac{M_j^{(2)}}{4\delta_j(1-\delta_j)}\right\}<1.
\]
Let $y=C+3\log_q s$, where $C$ is a large constant to be chosen later. Let $k=\max\{j\in [1, J]\cap \Z\colon \deg p_j(x)\leq y\}$ and let
\[
\delta_i=
\begin{cases}
0, &\text{if } i\leq k,\\
\frac{1}{2}, &\text{if } i> k.
\end{cases}
\]
We have 
\[
\eta\leq \sum_{1\leq j\leq k}M_j^{(1)}+\sum_{k<j\leq J}M_j^{(2)}=:\eta_1+\eta_2.
\]
Then, based on part (2) of Lemma~\ref{lem_moments_estimate}, it follows that
\begin{equation}\label{bound eta 2}
\begin{split}
    \eta_2&\ll \sum_{\deg p_j(x)>y}\frac{s^2(\deg p_j(x))^{6}}{q^{2\deg p_j(x)}}\ll \sum_{\deg p_j(x)> y}\frac{s^2}{q^{1.9\deg p_j(x)}}\\
    &\leq\sum_{n>y}\sum_{\substack{p(x) \text{ irreducible}\\\deg p(x)=n}}\frac{s^2}{q^{1.9n}}\ll \sum_{n>y}\frac{s^2}{q^{1.9n}}\frac{q^n}{n}\ll \sum_{n>y}\frac{s^2}{q^{0.9n}}\ll
    \frac{s^2}{q^{0.9y}}=\frac{1}{q^{0.9C}s^{0.7}}.
\end{split}
\end{equation}

Next, suppose $\deg d_1(x)>t$, where $t$ is to be chosen later. To bound $\eta_1$, we apply part (1) in Lemma~\ref{lem_moments_estimate} to obtain that
\begin{equation}\label{bound eta 1 first step}
    \eta_1\leq s\sum_{1\leq j\leq k}\sum_{\substack{ \deg d(x)\geq \deg d_1(x)\\ \partial^+d(x)=\deg p_j(x)}}q^{-\deg d(x)}\leq s\sum_{\substack{\deg d(x)\geq t\\ \partial^+d(x)\leq y}}q^{-\deg d(x)}.
\end{equation}
As regards the last sum above, we apply Lemma~\ref{lem_keylem_smooth_poly} and \eqref{bound eta 1 first step} to deduce that
\begin{equation}\label{bound eta 1 second step}
    \eta_1\ll sye^{-t/2y}.
\end{equation}

Now, by taking $t\colonequals 3y\log(sy)$, we can establish that $\eta_1\ll 1/\sqrt{sy}$. Combining \eqref{bound eta 2} and \eqref{bound eta 1 second step} above, we conclude that both $\eta_1$ and $\eta_2$ tend to $0$ as $C$ approaches $\infty$. Thus, we have $\eta<1$ for sufficiently large $C$. This completes the proof of Theorem~\ref{main_theorem}.\qed

\section{Proof of Theorem~\ref{thm_application}} 
\label{sec_application}
In this section, we prove Theorem~\ref{thm_application} by doing induction on $n$. 

By Theorem~\ref{main_theorem}, the case $n=1$ holds when we take $s=1$. Let $n>1$, and suppose that there are bounds $B_1,\dots,B_{n-1}$ such that the modulus with the $k$-th smallest degree in a minimal covering system of $\F_q[x]$ with distinct moduli is bounded by $B_k$  for each $1 \leq k < n$. 
	
Now, let $\cC=\set{a_i(x)+ \langle d_i(x)\rangle\colon 1 \leq i \leq \ell}$ be a minimal covering system of $\F_q[x]$  with distinct moduli and $\ell \geq n$ congruences, and $\deg d_1(x)\leq \cdots \leq \deg d_\ell(x)$. By the assumption, we have that $\deg d_k(x)\leq B_k$ for $1\leq k <n$. Let $L(x)=\on{lcm}[d_1(x), \dots, d_{n-1}(x)]$. Then $\deg L(x)\leq B\colonequals\sum_{k=1}^{n-1}B_k$. We write $\cC=\cC_1 \cup \cC_2$, where $\cC_1=\set{a_k(x)+ \langle d_k(x)\rangle\colon 1 \leq k <n}$ and $\cC_2=\set{a_i(x)+ \langle d_i(x)\rangle\colon n \leq i \leq \ell}$. Since $\cC$ is minimal, the progressions in $\cC_1$ do not form a covering system, hence the progressions in $\cC_2$
cover at least one congruence class modulo $L(x)$, say $g(x)+\langle L(x)\rangle$. Let
\[
\cC_3\colonequals\set{a_i(x)+r(x)+\langle d_i(x)\rangle\colon n \leq i \leq \ell, r(x)\in \F_q[x]/\langle L(x)\rangle},
\]
then $\cC_3$ is a covering system. Indeed, for any $a(x)\in \F_q[x]$
, there exists some $b(x)\in \F_q[x]/\langle L(x)\rangle$ such that $a(x)-b(x)\equiv g(x)\mod L(x)$. It follows that $a(x)-b(x)+\langle L(x)\rangle$ is covered by $\cC_2$. Thus, there exists some $n\leq i\leq \ell$ such that $a(x)-b(x)\equiv a_i(x)\mod d_i(x)$.  It follows that $a(x)\in a_i(x)+b(x)+\langle d_i(x)\rangle$. Hence $\cC_3$ is a covering system. Notice that the multiplicity of $\cC_3$ is at most $s=q^B$, and $d_n(x)$ is the polynomial with smallest degree in $\cC_3$. By Theorem~\ref{main_theorem}, we have $$\deg d_n(x) \leq 3(c+3\log_qs)\log(cs+3s\log_qs)=3(c+3B)\log(q^B(c+3B))$$ is bounded.  Thus, the desired result follows by induction. \qed

\section*{Acknowledgments}
The authors would like to thank the referee for very useful comments and insightful suggestions which led to improvements to this paper; in particular, for the suggestion of adding Theorem~\ref{thm_application} and its proof to the original version of the paper. Huixi Li's research is partially supported by the National Natural Science Foundation of China (Grant No. 12201313) and the Fundamental Research Funds for the Central Universities, Nankai University (Grant No. 63231145). Shaoyun Yi is supported by the National Natural Science Foundation of China (No. 12301016) and the Fundamental Research Funds for the Central Universities (No. 20720230025).


\begin{thebibliography}{10}
	
	\bibitem{Azlin2011thesis}
	Michael~Wayne Azlin.
	\newblock {\em Covering {S}ystems of {P}olynomial {R}ings {O}ver {F}inite
		{F}ields}.
	\newblock University of Mississippi, Electronic Theses and Dissertations. 39,
	2011.
	
	\bibitem{BBMST2020_2}
	Paul Balister, B\'{e}la Bollob\'{a}s, Robert Morris, Julian Sahasrabudhe, and
	Marius Tiba.
	\newblock Erd{\H{o}}s covering systems.
	\newblock {\em Acta Math. Hungar.}, 161(2):540--549, 2020.
	
	\bibitem{BBMRS2022}
	Paul Balister, B\'{e}la Bollob\'{a}s, Robert Morris, Julian Sahasrabudhe, and
	Marius Tiba.
	\newblock On the {E}rd{\H{o}}s covering problem: the density of the uncovered
	set.
	\newblock {\em Invent. Math.}, 228(1):377--414, 2022.
	
	\bibitem{Bender2014}
	Andreas~O. Bender.
	\newblock Representing an element in {$\mathbb{F}_q[t]$} as the sum of two
	irreducibles.
	\newblock {\em Mathematika}, 60(1):166--182, 2014.
	
	\bibitem{CFT2022}
	Maria Cummings, Michael Filaseta, and Ognian Trifonov.
	\newblock An upper bound for the minimum modulus in a covering system with
	squarefree moduli.
	\newblock {\em arXiv:2211.08548}, 2022.
	
	\bibitem{Erdos1957}
	Paul Erd\H{o}s.
	\newblock Some unsolved problems.
	\newblock {\em Michigan Math. J.}, 4:291--300, 1957.
	
	\bibitem{Erdos1963}
	Paul Erd\H{o}s.
	\newblock Quelques probl\`emes de th\'{e}orie des nombres.
	\newblock In {\em Monographies de {L}'{E}nseignement {M}ath\'{e}matique, {N}o.
		6}, pages 81--135. Universit\'{e} de Gen\`eve, L'Enseignement
	Math\'{e}matique, Geneva, 1963.
	
	\bibitem{Erdos1971}
	Paul Erd\H{o}s.
	\newblock Some problems in number theory.
	\newblock Computers in {Number} {Theory}, {Proc}. {Atlas} {Sympos}. {No}. 2,
	{Oxford} 1969, 405-414 (1971)., 1971.
	
	\bibitem{Erdos1973}
	Paul Erd\H{o}s.
	\newblock R\'{e}sultats et probl\`emes en th\'{e}orie des nombres.
	\newblock In {\em S\'{e}minaire {D}elange-{P}isot-{P}oitou (14e ann\'{e}e:
		1972/73), {T}h\'{e}orie des nombres, {F}asc. 2}, pages Exp. No. 24, 7.
	Secr\'{e}tariat Math\'{e}matique, Paris, 1973.
	
	\bibitem{EG1980}
	Paul Erd\H{o}s and Ronald~Lewis Graham.
	\newblock {\em Old and new problems and results in combinatorial number
		theory}, volume~28 of {\em Monographies de L'Enseignement Math\'{e}matique
		[Monographs of L'Enseignement Math\'{e}matique]}.
	\newblock Universit\'{e} de Gen\`eve, L'Enseignement Math\'{e}matique, Geneva,
	1980.
	
	\bibitem{Erdos1950}
	Paul Erd\"{o}s.
	\newblock On integers of the form {$2^k+p$} and some related problems.
	\newblock {\em Summa Brasil. Math.}, 2:113--123, 1950.
	
	\bibitem{FFSPY2007}
	Michael Filaseta, Kevin Ford, Sergei Konyagin, Carl Pomerance, and Gang Yu.
	\newblock Sieving by large integers and covering systems of congruences.
	\newblock {\em J. Amer. Math. Soc.}, 20(2):495--517, 2007.
	
	\bibitem{Gauss1966}
	Carl~Friedrich Gauss.
	\newblock {\em Disquisitiones arithmeticae.}
	\newblock Yale University Press, New Haven, Conn.-London,, 1966.
	\newblock Translated into English by Arthur A. Clarke, S. J.
	
	\bibitem{Hough2015}
	Bob Hough.
	\newblock Solution of the minimum modulus problem for covering systems.
	\newblock {\em Ann. of Math. (2)}, 181(1):361--382, 2015.
	
	\bibitem{KKL2022}
	Jonah Klein, Dimitris Koukoulopoulos, and Simon Lemieux.
	\newblock On the $j$-th smallest modulus of a covering system with distinct
	moduli. Int. J. Number Theory, in press, 2023. https://doi.org/10.1142/S1793042124500234
	
	\bibitem{KnopfmacherZhang2001}
	John Knopfmacher and Wen-Bin Zhang.
	\newblock {\em Number theory arising from finite fields}, volume 241 of {\em
		Monographs and Textbooks in Pure and Applied Mathematics}.
	\newblock Marcel Dekker, Inc., New York, 2001.
	
	\bibitem{LiWangYi2023pre}
	Huixi Li, Biao Wang, and Shaoyun Yi.
	\newblock On the minimum modulus problem in number fields.
	\newblock {\em arXiv:2302.05946}, 2023.
	
	\bibitem{Nielsen2009}
	Pace~P. Nielsen.
	\newblock A covering system whose smallest modulus is 40.
	\newblock {\em J. Number Theory}, 129(3):640--666, 2009.
	
	\bibitem{Rosen2002}
	Michael Rosen.
	\newblock {\em Number theory in function fields}, volume 210 of {\em Graduate
		Texts in Mathematics}.
	\newblock Springer-Verlag, New York, 2002.
	
	\bibitem{SS2022_BH}
	Will Sawin and Mark Shusterman.
	\newblock M\"{o}bius cancellation on polynomial sequences and the quadratic
	{B}ateman-{H}orn conjecture over function fields.
	\newblock {\em Invent. Math.}, 229(2):751--927, 2022.
	
	\bibitem{SS2022_TW}
	Will Sawin and Mark Shusterman.
	\newblock On the {C}howla and twin primes conjectures over {$\mathbb{F}_q[T]$}.
	\newblock {\em Ann. of Math. (2)}, 196(2):457--506, 2022.
	
	\bibitem{Pari2013}
	Pari/GP version 2.5.5.
	\newblock {\em The PARI Group}.
	\newblock available from http://pari.math.u-bordeaux.fr. Bordeaux, 2013.
	
	\bibitem{Wang2022}
	Biao Wang.
	\newblock Dynamics on the number of prime divisors for additive arithmetic
	semigroups.
	\newblock {\em Finite Fields Appl.}, 81:Paper No. 102029, 28, 2022.
	
	\bibitem{Warlimont1991}
	R.~Warlimont.
	\newblock Arithmetical semigroups. {II}. {S}ieving by large and small prime
	elements. {S}ets of multiples.
	\newblock {\em Manuscripta Math.}, 71(2):197--221, 1991.
	
	\bibitem{Weil1948}
	Andr\'{e} Weil.
	\newblock {\em Sur les courbes alg\'{e}briques et les vari\'{e}t\'{e}s qui s'en
		d\'{e}duisent.}
	\newblock Hermann \& Cie, Paris, 1948.
	
\end{thebibliography}

\end{document}